\documentclass[12pt,reqno]{amsart}
\usepackage{amssymb, amsthm, amsmath, amsfonts,mathrsfs}
\usepackage[margin=1.1in]{geometry}
\usepackage{hyperref}
\hypersetup{colorlinks=true, citecolor=blue, linkcolor=blue, urlcolor=blue, pdfstartview=FitH, pdfauthor=Goldmakher and Pollack, pdftitle=Lagrange Refined}

\pdfpagewidth=8.5in
\pdfpageheight=11in

\begin{document}

\pagestyle{empty}

\parskip0pt
\parindent10pt

\newenvironment{answer}{\color{Blue}}{\color{Black}}
\newenvironment{exercise}{\color{Blue}\begin{exr}}{\end{exr}\color{Black}}

\theoremstyle{plain} 
\newtheorem*{lagrange}{Lagrange's four-square theorem}
\newtheorem*{threesquare}{Legendre--Gauss three-squares theorem}
\newtheorem{theorem}{Theorem}
\newtheorem{prop}[theorem]{Proposition}
\newtheorem{lemma}[theorem]{Lemma}
\newtheorem{cor}[theorem]{Corollary}
\newtheorem{conj}[theorem]{Conjecture}
\newtheorem{funfact}[theorem]{Fun Fact}
\newtheorem*{claim}{Claim}
\newtheorem{question}{Question}
\newtheorem*{conv}{Convention}

\theoremstyle{remark}
\newtheorem{exr}{Exercise}

\theoremstyle{definition}
\newtheorem*{defn}{Definition}
\newtheorem{example}{Example}
\newtheorem{rmk}{Remark}

\newtheorem*{FTA}{Fundamental Theorem of Algebra}
\newtheorem*{FTGT}{Fundamental Theorem of Galois Theory}

\renewcommand{\mod}[1]{{\ifmmode\text{\rm\ (mod~$#1$)}\else\discretionary{}{}{\hbox{ }}\rm(mod~$#1$)\fi}}

\newcommand{\ns}{\mathrel{\unlhd}}
\newcommand{\wt}[1]{\widetilde{#1}}
\newcommand{\wh}[1]{\widehat{#1}}
\newcommand{\cbrt}[1]{\sqrt[3]{#1}}
\newcommand{\floor}[1]{\left\lfloor#1\right\rfloor}
\newcommand{\abs}[1]{\left|#1\right|}
\newcommand{\ds}{\displaystyle}
\newcommand{\nn}{\nonumber}
\newcommand{\im}{\textup{im }}
\renewcommand{\ker}{\textup{ker }}
\renewcommand{\char}{\textup{char }}
\renewcommand{\Im}{\textup{Im }}
\renewcommand{\Re}{\textup{Re }}
\newcommand{\area}{\textup{area }}
\newcommand{\isom}
    {\ds \mathop{\longrightarrow}^{\sim}}
\renewcommand{\ni}{\noindent}
\renewcommand{\bar}{\overline}
\newcommand{\pdiv}{\mid\!\mid}

\newcommand{\Gal}{\textup{Gal}}
\newcommand{\Aut}{\textup{Aut}}
\newcommand{\disc}{\textup{disc}}
\newcommand{\sgn}{\textup{sgn}}

\newcommand{\mattwo}[4]{
\begin{pmatrix} #1 & #2 \\ #3 & #4 \end{pmatrix}
}

\newcommand{\vtwo}[2]{
\begin{pmatrix} #1 \\ #2 \end{pmatrix}
}
\newcommand{\vthree}[3]{
\begin{pmatrix} #1 \\ #2 \\ #3 \end{pmatrix}
}
\newcommand{\vcol}[3]{
\begin{pmatrix} #1 \\ #2 \\ \vdots \\ #3 \end{pmatrix}
}


\newcommand*\wb[3]{%
  {\fontsize{#1}{#2}\usefont{U}{webo}{xl}{n}#3}}

\newcommand\myasterismi{%
  \par\bigskip\noindent\hfill\wb{10}{12}{I}\hfill\null\par\bigskip
}
\newcommand\myasterismii{%
  \par\bigskip\noindent\hfill\wb{15}{18}{UV}\hfill\null\par\medskip
}
\newcommand\myasterismiii{%
  \par\bigskip\noindent\hfill\wb{15}{18}{z}\hfill\null\par\bigskip
}

\newcommand{\one}{{\rm 1\hspace*{-0.4ex} \rule{0.1ex}{1.52ex}\hspace*{0.2ex}}}

\renewcommand{\v}{\vec{v}}
\newcommand{\w}{\vec{w}}
\newcommand{\e}{\vec{e}}
\newcommand{\m}{\vec{m}}
\renewcommand{\u}{\vec{u}}
\newcommand{\vecx}{\vec{e}_1}
\newcommand{\vecy}{\vec{e}_2}
\newcommand{\vo}{\vec{v}_1}
\newcommand{\vt}{\vec{v}_2}

\renewcommand{\o}{\omega}
\renewcommand{\a}{\alpha}
\renewcommand{\b}{\beta}
\newcommand{\g}{\gamma}
\renewcommand{\d}{\delta}
\renewcommand{\t}{\theta}

\newcommand{\Z}{\mathbb Z}
\newcommand{\ZN}{\Z_N}
\newcommand{\Q}{\mathbb Q}
\newcommand{\N}{\mathbb N}
\newcommand{\R}{\mathbb R}
\newcommand{\C}{\mathbb C}
\newcommand{\F}{\mathbb F}
\newcommand{\B}{\mathcal B}
\newcommand{\p}{\mathcal P}
\renewcommand{\P}{\mathbb P}
\renewcommand{\r}{\mathcal R}
\newcommand{\s}{\mathscr S}
\renewcommand{\L}{\mathcal L}
\renewcommand{\l}{\lambda}
\newcommand{\E}{\mathcal E}
\newcommand{\oh}{\mathcal O}

\newcommand{\0}{{\vec 0}}

\newcommand{\ignore}[1]{}

\newcommand{\poly}[1]{\textup{Poly}_{#1}}

\newcommand*\circled[1]{\tikz[baseline=(char.base)]{
            \node[shape=circle,draw,inner sep=2pt] (char) {#1};}}

\newcommand*\squared[1]{\tikz[baseline=(char.base)]{
            \node[shape=rectangle,draw,inner sep=2pt] (char) {#1};}}

\title{Refinements of Lagrange's four-square theorem}
\author{Leo Goldmakher}
\address{Dept of Mathematics \& Statistics,
Williams College,
Williamstown, MA, USA}
\email{leo.goldmakher@williams.edu}
\author{Paul Pollack}
\address{Dept of Mathematics, University of Georgia, Athens, GA, USA}
\email{pollack@uga.edu}

\thanks{LG is partially supported by an NSA Young Investigator grant. PP is partially supported by NSF award DMS-1402268.}

\begin{abstract}
A well-known theorem of Lagrange asserts that every nonnegative integer $n$ can be written in the form $a^2+b^2+c^2+d^2$, where $a,b,c,d \in \Z$. We characterize the values assumed by $a+b+c+d$ as we range over all such representations of $n$.
\end{abstract}

\maketitle

\noindent Our point of departure is the following signature result from a first course in number theory.
\begin{lagrange} Every nonnegative integer can be written as the sum of four integer squares. That is, for every $n \in \N$, there are $a,b,c,d\in \Z$ with \begin{equation}\label{eq:foursquaresrep} n=a^2+b^2+c^2+d^2.\end{equation}
\end{lagrange}
\noindent For instance ($n=2017$), we have
\[ 2017 = 18^2 + 21^2 + 24^2 + 26^2 . \]

Twenty years before Lagrange's proof, Euler had already conjectured a refinement of the four-square theorem for odd numbers $n$. The following statement can be found in a letter to Goldbach dated June 9, 1750.
\begin{conj} Every odd positive integer $n$ has a representation in the form \eqref{eq:foursquaresrep} satisfying the extra constraint $a+b+c+d=1$.
\end{conj}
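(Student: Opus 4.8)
The plan is to translate Euler's conjecture into a special case of the Legendre--Gauss three-squares theorem. Since the constraint $a+b+c+d=1$ should be built in from the start, the natural first move is to eliminate $d$: set $d = 1-a-b-c$ and substitute into \eqref{eq:foursquaresrep}. Using the identity $(a+b)^2+(b+c)^2+(c+a)^2 = 2(a^2+b^2+c^2+ab+bc+ca)$ to simplify, a short computation shows that, in the variables $u=a+b$, $v=b+c$, $w=c+a$,
\[ a^2+b^2+c^2+(1-a-b-c)^2 = 1 + u^2+v^2+w^2-(u+v+w), \]
so $n$ admits a representation of the desired form exactly when $n-1 = u^2+v^2+w^2-(u+v+w)$ has a solution of the appropriate type. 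Completing the square in each of $u,v,w$ and multiplying through by $4$ recasts this as
\[ 4n-1 = (2u-1)^2 + (2v-1)^2 + (2w-1)^2; \]
that is, the problem reduces to writing $4n-1$ as a sum of three odd squares, subject to a parity condition that makes the change of variables reversible over $\Z$.

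Next I would produce the three-square decomposition. Because $n$ is odd, $4n-1 \equiv 3 \pmod 8$, so in particular $4n-1$ is neither divisible by $4$ nor of the form $8k+7$; hence the three-squares theorem yields $x,y,z \in \Z$ with $4n-1 = x^2+y^2+z^2$. (This is exactly where the hypothesis that $n$ is odd enters: for even $n$ one would have $4n-1 \equiv 7 \pmod 8$, and no such decomposition exists.) Moreover, since $4n-1 \equiv 3 \pmod 4$ while every square is $\equiv 0$ or $1 \pmod 4$, all three of $x,y,z$ must be odd, so we may write $x = 2u-1$, $y = 2v-1$, $z = 2w-1$ with $u,v,w \in \Z$.

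The remaining point --- and the one I expect to require the most care --- is ensuring that the inverse change of variables returns genuine integers. Unwinding the substitution, from $u,v,w$ one recovers
\[ a = \tfrac12(u-v+w), \quad b = \tfrac12(u+v-w), \quad c = \tfrac12(-u+v+w), \quad d = 1 - \tfrac12(u+v+w), \]
and these lie in $\Z$ precisely when $u+v+w$ is even, i.e. when $x+y+z \equiv 1 \pmod 4$. There is no reason the representation supplied by the three-squares theorem satisfies this, but it can always be forced: replacing any one of $x,y,z$ by its negative preserves the identity $4n-1 = x^2+y^2+z^2$ while changing $x+y+z$ by $2 \pmod 4$, so after at most one sign flip we may assume $x+y+z \equiv 1 \pmod 4$. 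With that normalization, defining $a,b,c,d$ by the displayed formulas and reading the opening computation backwards gives $n = a^2+b^2+c^2+d^2$ with $a+b+c+d = 1$, which is Euler's conjecture.
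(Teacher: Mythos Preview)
Your proof is correct and is essentially the paper's own argument specialized to $T=1$: your variables $2u-1$, $2v-1$, $2w-1$ are exactly the paper's $A=2(a+b)-T$, $C=2(b+c)-T$, $B=2(a+c)-T$ at $T=1$, and your sign-flip to force $x+y+z\equiv 1\pmod 4$ is the paper's sign-flip to avoid $A+B-C+T\equiv 2\pmod 4$. The only difference is organizational: the paper proves the general criterion $T\in\s(n)\iff 4n-T^2$ is a sum of three squares and then reads off Euler's conjecture from the case $4n-1\equiv 3\pmod 8$, whereas you run the identical computation directly for $T=1$.
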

\noindent Picking back up our earlier example, when $n=2017$, Euler's conjecture is satisfied with $a=-18$, $b=21$, $c=24$, and $d=-26$.
A proof of Euler's conjecture was posted to MathOverflow by Franz Lemmermeyer in September 2010.\footnote{See \url{https://mathoverflow.net/questions/37278/euler-and-the-four-squares-theorem}.}

Much more recently, Sun \& Sun (apparently unaware of Euler's conjecture) presented a number of related refinements of Lagrange's theorem \cite{sunsun16} (cf. \cite{sun16}). One of their many results is that for every $n \in \N$, there is a representation \eqref{eq:foursquaresrep} with $a+b+c+d$ a  square, as well as one with $a+b+c+d$ a cube \cite[Theorem 1.1(a)]{sunsun16}.

We can unify all the above assertions by introducing the \emph{sum spectrum}
\[ \s(n) = \{a+b+c+d: a^2+b^2+c^2+d^2=n\} . \]
Lagrange's theorem is equivalent to $\s(n) \neq \emptyset$; Euler's conjecture asserts that ${1 \in \s(n)}$ for all odd $n \in \N$; and Sun \& Sun's theorem asserts that $\s(n)$ contains a perfect square and a perfect cube for every $n$. Our goal in this note is to completely describe the set $\s(n)$.

We have not found our results stated anywhere in the literature, but we do not claim they are novel. In the introduction to his resolution \cite{cauchy13} of Fermat's polygonal number conjecture, Cauchy poses the following problem:
\emph{D\'{e}composer un nombre entier donn\'{e} en quatre quarr\'{e}s dont les racines fassent une somme donn\'{e}e.}\footnote{\emph{Decompose a given whole number into four squares whose roots make a given sum.}} For Cauchy, ``racines'' are nonnegative; hence, he is asking for a description of
\[ \s^{+}(n):= \{a+b+c+d: a, b, c, d \in \N \text{ and } a^2+b^2+c^2+d^2=n\}. \]
Cauchy goes on to prove a partial characterization of $\s^{+}(n)$ (see Remark \ref{rmk:Cauchy} below for a summary of his results), by essentially the same methods we describe below.
Despite being anticipated,
we believe an explicit description of $\s(n)$ is sufficiently interesting (and Cauchy's work on $\s^{+}(n)$ sufficiently underappreciated) to warrant popularization here. Moreover, we will show how our characterization of $\s(n)$ immediately implies both Sun \& Sun's theorems and a generalization of Euler's conjecture.

We begin by recording two easy observations. First, since an integer and its square have the same parity, every $T \in \s(n)$ satisfies
\begin{equation}\label{eq:easycondition1} T\equiv n\mod{2}. \end{equation}
Second, for any real numbers $a,b,c,d$, the Cauchy--Schwarz inequality yields
\begin{align*} (a+b+c+d)^2 &\le (a^2 + b^2+c^2+d^2) (1^2+1^2+1^2+1^2) \\
&= 4(a^2+b^2+c^2+d^2); \end{align*}
it follows that every $T \in \s(n)$ satisfies
\begin{equation}\label{eq:easycondition2} T^2 \le 4n. \end{equation}

As shown in Table \ref{tbl:exceptional}, the necessary conditions \eqref{eq:easycondition1} and \eqref{eq:easycondition2} are quite often (but not always) sufficient for membership in $\s(n)$. The following theorem, which is our main result, tells the full story.

\begin{table}
\begin{tabular}{|c||c|}
  \hline
  $n$ & exceptional $T$ \\
  \hline\hline
  1 & $\emptyset$ \\
  \hline
  2 & $\emptyset$ \\
  \hline
  3 & $\emptyset$ \\
  \hline
  4 &$\emptyset$ \\
  \hline
  5 & $\emptyset$ \\
  \hline
  6 & $\emptyset$ \\
  \hline
  7 & $\emptyset$ \\
  \hline
  8 & $\{\pm 2\}$ \\
  \hline
  9 & $\emptyset$ \\
  \hline
  10 & $\emptyset$ \\
  \hline
  11 & $\emptyset$ \\
  \hline
  12 & $\emptyset$ \\
  \hline
  13 & $\emptyset$ \\
  \hline
  14 & $\emptyset$ \\
  \hline
  15 & $\emptyset$ \\
  \hline
  16 & $\{\pm 2, \pm 6\}$ \\
  \hline
\end{tabular}
\hspace*{\fill}
\begin{tabular}{|c||c|}
  \hline
  $n$ & exceptional $T$ \\
  \hline\hline
  17 & $\emptyset$ \\
  \hline
  18 & $\emptyset$ \\
  \hline
  19 & $\emptyset$ \\
  \hline
  20 & $\emptyset$ \\
  \hline
  21 & $\emptyset$ \\
  \hline
  22 & $\emptyset$ \\
  \hline
  23 & $\emptyset$ \\
  \hline
  24 & $\{\pm 2, \pm 6\}$ \\
  \hline
  25 & $\emptyset$\\
  \hline
  26 & $\emptyset$\\
  \hline
  27 & $\emptyset$ \\
  \hline
  28 & $\{0\}$ \\
  \hline
  29 & $\emptyset$ \\
  \hline
  30 & $\emptyset$ \\
  \hline
  31 & $\emptyset$ \\
  \hline
  32 & $\{\pm 2, \pm 4, \pm 6, \pm 10\}$ \\
  \hline
\end{tabular}
\hspace*{\fill}
\begin{tabular}{|c||c|}
  \hline
  $n$ & exceptional $T$ \\
  \hline\hline
  33 & $\emptyset$ \\
  \hline
  34 & $\emptyset$ \\
  \hline
  35 & $\emptyset$ \\
  \hline
  36 & $\emptyset$ \\
  \hline
  37 & $\emptyset$ \\
  \hline
  38 & $\emptyset$ \\
  \hline
  39 & $\emptyset$ \\
  \hline
  40 & $\{\pm 2, \pm 6, \pm 10\}$ \\
  \hline
  41 & $\emptyset$ \\
  \hline
  42 & $\emptyset$ \\
  \hline
  43 & $\emptyset$ \\
  \hline
  44 & $\{\pm 8\}$ \\
  \hline
  45 & $\emptyset$ \\
  \hline
  46 & $\emptyset$ \\
  \hline
  47 & $\emptyset$ \\
  \hline
  48 & $\{\pm 2, \pm 6, \pm 10\}$\\
  \hline
\end{tabular}
\vskip 0.05in
\caption{Values of $T$ satisfying \eqref{eq:easycondition1} and \eqref{eq:easycondition2} but not belonging to $\s(n)$.}\label{tbl:exceptional}
\end{table}

\begin{theorem}\label{thm:keyprop} Suppose $n$ and $T$ are integers satisfying \eqref{eq:easycondition1}. Then $T \in \s(n)$
if and only if $4n-T^2$ is a sum of three integer squares.
\end{theorem}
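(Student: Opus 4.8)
The plan is to base both implications on a single algebraic identity. Expanding both sides shows that for all integers $a,b,c,d$,
\[ 4(a^2+b^2+c^2+d^2)-(a+b+c+d)^2 = (a+b-c-d)^2+(a-b+c-d)^2+(a-b-c+d)^2 . \]
The forward direction is then immediate: if $T\in\s(n)$, fix a representation $n=a^2+b^2+c^2+d^2$ with $a+b+c+d=T$, and the identity exhibits $4n-T^2$ as a sum of three integer squares.

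For the converse, suppose $4n-T^2=x^2+y^2+z^2$ with $x,y,z\in\Z$. The idea is to run the identity in reverse; that is, to solve the linear system $a+b-c-d=x$, $a-b+c-d=y$, $a-b-c+d=z$, $a+b+c+d=T$ for $a,b,c,d$. Since the coefficient matrix is invertible, the system has a unique (a priori only rational) solution
\[ a=\tfrac{T+x+y+z}{4},\quad b=\tfrac{T+x-y-z}{4},\quad c=\tfrac{T-x+y-z}{4},\quad d=\tfrac{T-x-y+z}{4}. \]
Whenever these four numbers happen to be integers we will be done: one has $a+b+c+d=T$ identically, and, since by construction $a+b-c-d=x$, $a-b+c-d=y$, $a-b-c+d=z$, plugging into the displayed identity gives $4(a^2+b^2+c^2+d^2)-T^2=x^2+y^2+z^2=4n-T^2$, whence $n=a^2+b^2+c^2+d^2$ and $T\in\s(n)$. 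Thus the entire content of the converse is an integrality check on $a,b,c,d$.

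This check is the crux, and the only place where hypothesis \eqref{eq:easycondition1} is needed. First observe that $a,b,c,d\in\Z$ as soon as $x,y,z$ all have the same parity and $4\mid T+x+y+z$, since the remaining three numerators differ from $T+x+y+z$ by $2(y+z)$, $2(x+z)$, $2(x+y)$, each then a multiple of $4$. Now work modulo $4$. Because $4n-T^2\equiv -T^2\pmod 4$, the quantity $x^2+y^2+z^2$ is $\equiv 0\pmod 4$ when $T$ is even and $\equiv 3\pmod 4$ when $T$ is odd; as every square is $\equiv 0$ or $1\pmod 4$, in the first case $x,y,z$ are all even and in the second all odd, so the same-parity requirement is automatic. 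It remains to arrange $4\mid T+x+y+z$. If $T$ is odd, then $T,x,y,z$ are all odd, so $T+x+y+z$ is even, and after possibly replacing $z$ by $-z$ (which alters $T+x+y+z$ by $2z\equiv 2\pmod 4$) we may assume $4\mid T+x+y+z$. If $T$ is even, then $n$ is even by \eqref{eq:easycondition1}; writing $T=2s$, $x=2u$, $y=2v$, $z=2w$, we get $u^2+v^2+w^2=n-s^2$, the condition $4\mid T+x+y+z$ becomes $2\mid s+u+v+w$, and since $u+v+w\equiv u^2+v^2+w^2=n-s^2\pmod 2$ we obtain $s+u+v+w\equiv n+(s-s^2)\equiv n\equiv 0\pmod 2$, using that $s-s^2$ and $n$ are both even. (Note that in the even case a sign flip would be useless, as $x,y,z$ are even, but fortunately none is needed.)

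I expect the only genuine obstacle to be this final modulo-$4$ bookkeeping — in particular, seeing that the $T$ even case cannot be handled by changing signs but instead follows by dividing through by $2$ and invoking \eqref{eq:easycondition1}, which is exactly the point where the parity hypothesis earns its keep. Verifying the identity, and deducing $n=a^2+b^2+c^2+d^2$ once integrality is known, are both routine.
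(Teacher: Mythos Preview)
Your proof is correct and follows essentially the same route as the paper: the identity you use is the paper's identity (their $A,B,C$ are your $x,y,-z$, once one substitutes $d=T-a-b-c$), and the converse in both cases is a linear inversion followed by a parity check with a sign flip when $T$ is odd. The only cosmetic difference is in the even case, where the paper expands $(A+B-C+T)^2$ modulo $8$ while you divide through by $2$ and invoke $n\equiv T\pmod 2$ directly; both reach the same conclusion.
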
\tabularnewline

\noindent Note that \eqref{eq:easycondition2} is implied by the condition on $4n-T^2$ and so does not need to be included explicitly as a hypothesis in Theorem \ref{thm:keyprop}.

To convince the reader that Theorem \ref{thm:keyprop} qualifies as a complete description of $\s(n)$, we recall the following classical result (see the Appendix to Chapter IV of \cite{serre73} for a proof).

\begin{threesquare}
Let $n \in \N$. Then $n$ can be written as a sum of three squares if and only if $n \neq 4^k(8\ell+7)$ for any $k,\ell \in \N$.
\end{threesquare}

\begin{proof}[Proof of Theorem \ref{thm:keyprop}]
We begin by recording the easily-verified identity
\begin{multline}
(2(a+b)-T)^2 + (2(a+c)-T)^2 + (2(b+c)-T)^2 + T^2
\\
= 4a^2 + 4b^2 + 4c^2 + 4(T-a-b-c)^2 .\label{eq:1stID}
\end{multline}
Thus if $T \in \s(n)$, say with $a^2+b^2+c^2+d^2=n$ and $a+b+c+d=T$, then
\[ (2(a+b)-T)^2 + (2(a+c)-T)^2 + (2(b+c)-T)^2
= 4n-T^2, \]
so that $4n-T^2$ is a sum of three squares.

Conversely, suppose that $4n-T^2$ is a sum of three squares, say
\begin{equation}\label{eq:3squaresexpression} 4n-T^2 = A^2 + B^2 + C^2. \end{equation}
In view of \eqref{eq:1stID}, it is enough to show that---after possibly swapping the signs of $A$, $B$, $C$---there are $a, b, c \in \Z$ with
    \begin{equation}\label{eq:abcsystem} 2a+2b-T = A,\quad 2a+2c-T=B,\quad 2b+2c-T = C. \end{equation}
Indeed, in that case setting $d = T-(a+b+c)$, we have $$a+b+c+d = T,$$ and
\begin{align*} 4(a^2+b^2+c^2+&d^2) - T^2 \\&= (2a+2b-T)^2 + (2a+2c-T)^2 + (2b+2c-T)^2 \\&=A^2+B^2+C^2 = 4n-T^2,\end{align*}
so that \[ a^2+b^2+c^2+d^2=n.\] Thus, we focus our attention on \eqref{eq:abcsystem}.

Solving for $a, b, c$ in terms of $A,B,C$ gives
\[ a=\frac{1}{4}(A+B-C+T), \enskip b = \frac{1}{4}(A-B+C+T), \enskip c= \frac{1}{4}(-A+B+C+T). \]
We claim that $A,B,C$, and $T$ must all have the same parity. To see this, note that \eqref{eq:3squaresexpression} gives ${A^2 + B^2 + C^2 \equiv -T^2\mod{4}}$. If $T$ is odd then ${A^2+B^2+C^2 \equiv 3 \mod 4}$, and a moment's thought shows that all of $A, B$, and $C$ must be odd. Similarly, if $T$ is even, then $A^2+B^2+C^2 \equiv 0 \mod 4$, and this forces $A, B$, and $C$ to all be even.
In either case, the difference between any pair of $A$, $B$, and $C$ is even, so the difference between any pair of $a$, $b$, and $c$ is an integer. It follows that if any of $a, b, c \in \Z$, then all three are in $\Z$. Moreover,
\[ A+B-C+T \equiv A+B+C+T \equiv A^2 + B^2 + C^2 + T^2 \equiv 4n\equiv 0 \mod 2, \]
and so the only way we can fail to have $a \in \Z$ (and hence all of $a, b, c \in \Z$) is if
\begin{equation}\label{eq:cong} A+B-C+T \equiv 2\mod{4}. \end{equation}
If $T$ is odd, then $A$ is odd, and so if necessary we can replace $A$ with $-A$ to avoid \eqref{eq:cong}. If $T$ is even, we will show that \eqref{eq:cong} cannot occur. Indeed, \eqref{eq:cong} implies that ${8 \nmid (A+B-C+T)^2}$. But
\begin{align*}
(A+B-&C+T)^2\\&= A^2 + B^2 + C^2 + T^2 +2(AB - AC + AT -BC +BT - CT) \\
            &= 4n + 2(AB-AC+AT-BC+BT-CT) \\
            &\equiv 0 \mod 8;
\end{align*}
here we used that $n \equiv T \equiv 0 \mod{2}$ and that all of $A, B, C$, and $T$ are even.
\end{proof}

Let us see how Theorem \ref{thm:keyprop} makes quick work of both the conjecture of Euler and the theorems of Sun \& Sun. We begin with the latter. If $4n$ itself is a sum of three squares, then Theorem \ref{thm:keyprop} shows that $T=0 \in \s(n)$, and $0$ is both a square and a cube. Otherwise, by the Legendre--Gauss theorem, $4n = 4^{k+1} (8\ell+7)$, where $k$ and $\ell$ are nonnegative integers. Then
\begin{multline*} 4n - (2^{k})^2 = 4^k(32\ell+27), \quad 4n-(2^{k+1})^2 = 4^{k+1}(8\ell+6), \\ \text{and} \quad 4n - (2^{k+2})^2 = 4^{k+1}(8\ell+3); \end{multline*}
invoking the Legendre--Gauss theorem once more, we see that all three of these numbers are sums of three squares. By Theorem \ref{thm:keyprop}, all  of $2^k, 2^{k+1}, 2^{k+2}$ must belong to $\s(n)$. Clearly, the set $\{2^k, 2^{k+1}, 2^{k+2}\}$ contains both a square and a cube.

As for Euler's conjecture, we prove the following generalization (which, for most $n$, gives a very simple description of $\s(n)$):

\begin{prop}\label{prop:no4} Suppose $n \in \N$ is not a multiple of 4.
Then
\[
\s(n) = \{T \equiv n \mod 2 : |T| \leq 2 \sqrt n\} .
\]
\end{prop}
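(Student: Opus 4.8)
The plan is to deduce Proposition~\ref{prop:no4} from Theorem~\ref{thm:keyprop} and the Legendre--Gauss three-squares theorem; all the real content is a brief $2$-adic bookkeeping argument in which the hypothesis $4\nmid n$ gets used.

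The inclusion $\s(n)\subseteq\{T\equiv n\mod 2: |T|\le 2\sqrt n\}$ requires nothing new: it is exactly the conjunction of the elementary necessary conditions \eqref{eq:easycondition1} and \eqref{eq:easycondition2}, since $T^2\le 4n$ is the same as $|T|\le 2\sqrt n$. So the work lies entirely in the reverse inclusion. Fix an integer $T$ with $T\equiv n\mod 2$ and $|T|\le 2\sqrt n$; by Theorem~\ref{thm:keyprop} it suffices to show that $4n-T^2$ (a nonnegative integer, by our bound on $T$) is a sum of three integer squares, and by the three-squares theorem this is precisely the assertion that $4n-T^2$ is \emph{not} of the form $4^k(8\ell+7)$. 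I will establish this by showing that in each case either the $2$-adic valuation $v_2(4n-T^2)$ is odd, or the odd part of $4n-T^2$ is $\not\equiv 7\mod 8$; since any number of the form $4^k(8\ell+7)$ has even $2$-adic valuation and odd part $\equiv 7\mod 8$, either alternative suffices.

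The case analysis is organized by the residue of $n$ modulo~$4$, the residue $0$ being excluded by hypothesis. If $n$ is odd, then $T$ is odd and a computation modulo~$8$ gives $4n-T^2\equiv 4-1\equiv 3\mod 8$; thus $4n-T^2$ is odd with odd part $\equiv 3\not\equiv 7\mod 8$. (In particular this recovers Euler's conjecture and its announced generalization: for odd $n$, $\s(n)$ consists of \emph{all} odd $T$ with $|T|\le 2\sqrt n$, and $1$ is always among them.) If instead $n\equiv 2\mod 4$, then $T$ is even, say $T=2s$, so $4n-T^2=4(n-s^2)$, and one sub-splits on the parity of $s$. When $s$ is even, $s^2\equiv 0\mod 4$ forces $n-s^2\equiv 2\mod 4$, whence $v_2(4n-T^2)=3$ is odd. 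When $s$ is odd, $n-s^2$ is odd, and since $n\equiv 2$ or $6\mod 8$ while $s^2\equiv 1\mod 8$, the odd part $n-s^2$ of $4n-T^2$ is $\equiv 1$ or $5\mod 8$, hence $\not\equiv 7$. This exhausts the cases, so $4n-T^2$ is always a sum of three squares and the reverse inclusion follows.

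The proposition is thus not deep so much as in need of careful bookkeeping: the only branch that demands real attention is $n\equiv 2\mod 4$, where one must track residues modulo~$8$ and separate the two parities of $s=T/2$. The hypothesis $4\nmid n$ is exactly what keeps $4n-T^2$ out of the forbidden shape $4^k(8\ell+7)$, and the argument above shows where it enters each branch. (When $4\mid n$, by contrast, values of $T$ with $4n-T^2=4^k(8\ell+7)$ do occur, which is why the exceptional $T$ in Table~\ref{tbl:exceptional} appear only at multiples of~$4$.)
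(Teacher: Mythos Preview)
Your proof is correct and follows essentially the same route as the paper: reduce via Theorem~\ref{thm:keyprop} to showing $4n-T^2$ is a sum of three squares, then verify the Legendre--Gauss criterion by a mod-$8$ calculation when $n$ is odd and by analyzing $n-s^2$ (with $T=2s$) when $n\equiv 2\pmod 4$. The only cosmetic difference is that the paper handles the $n\equiv 2\pmod 4$ case in one stroke by observing $n-s^2\equiv 1$ or $2\pmod 4$, whereas you sub-split on the parity of $s$ and phrase things in terms of $2$-adic valuation; the content is identical.
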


\begin{rmk}\label{rmk:Cauchy}
Cauchy proves that if $T \in \s^{+}(n)$, then $4n-T^2$ is a sum of three squares, and that when $4 \nmid n$,
\[
\s^{+}(n) \supseteq
\{T \equiv n \mod 2 : \sqrt{3n-2} - 1 \le T \le 2 \sqrt n\} .
\]
See \cite[Corollary I of Theorem I, Theorem IV, and Corollary II of Theorem III]{cauchy13}.
\end{rmk}

\begin{proof}[Proof of Proposition \ref{prop:no4}]
In view of Theorem \ref{thm:keyprop}, our task is to show that $4n-T^2$ is a sum of three squares whenever $4 \nmid n$. Suppose first that $n$ is odd, so that $T$ is also odd. Then $4n \equiv 4\mod{8}$ and $T^2\equiv 1\mod{8}$, whence $4n-T^2 \equiv 3 \mod 8$. By the Legendre--Gauss theorem, $4n-T^2$ is a sum of three squares, and we're done.
Now suppose instead that $n$ is twice an odd integer. Then $T$ is even, say $T=2t$, so that $4n - T^2 = 4(n-t^2)$.
It will suffice to show that $n-t^2$ is a sum of three squares, for then $4(n-t^2)$ is as well.
Since $n\equiv 2\mod{4}$ and $t^2\equiv 0\text{ or }1\mod{4}$, we have $n-t^2 \equiv 1\text{ or } 2\mod{4}$. In particular, $n-t^2$ is not of the form $4^k(8\ell+7)$, and so the desired conclusion follows from the Legendre--Gauss theorem. This completes the proof.
\end{proof}

We conclude this note with a few remarks about the structure of $\s(n)$ for general $n$. When $8\mid n$, it is easy to see that any integer solution to
\[ a^2+b^2+c^2+d^2= n \]
has all of $a,b,c,d$ even. Thus, there is a bijection $(a,b,c,d) \leftrightarrow (a/2,b/2,c/2,d/2)$ between representations of $n$ as a sum of four squares and representations of $n/4$. Consequently,
\[ \s(n) = 2 \s(n/4), \]
where the notation on the right-hand side means dilation by a factor of $2$. Iterating, if $k$ is the largest nonnegative integer for which $2^{2k+3} \mid n$, we find that
\[ \s(n) = 2^{k+1} \s(n/4^{k+1}). \]
We have from our choice of $k$ that $2\mid n/4^{k+1}$ while $8\nmid n/4^{k+1}$.

The observations of the last paragraph show that to describe $\s(n)$, it is enough to consider those cases where $8\nmid n$. When $4\nmid n$, Proposition \ref{prop:no4} tells us the answer. However, when $4 \mid n$, it does not seem that there is much to be said beyond what follows immediately from Theorem \ref{thm:keyprop} and the Legendre--Gauss theorem.

The situation becomes both clearer and a bit cleaner if one is willing to shift perspective.
Rather than first picking $n$ and asking for a description of the elements of $\s(n)$, we may pick $T$ and ask for which $n$ we have $T \in \s(n)$.

\begin{prop} Let $T \in \Z$. Assume that $n\ge T^2/4$ and $n\equiv T\mod{2}$.
\begin{enumerate}
  \item If $T$ is odd, then $T \in \s(n)$.
  \item If $T$ is twice an odd integer, then $T \in \s(n)$ if and only if $n\not\equiv 0\mod{8}$.
  \item Suppose that $4\mid T$. Then $T \in \s(n)$ if and only if
  \[ n \notin \bigcup_{k \ge 1} \{T^2-4^k \mod{2^{2k+3}}\}. \]
  Here the right-hand side is an infinite union of disjoint residue classes modulo $2^{2k+3}$, over positive integers $k$.
\end{enumerate}
\end{prop}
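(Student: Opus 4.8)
The plan is to push everything through Theorem~\ref{thm:keyprop}. Under the standing hypotheses $n\equiv T\mod{2}$ and $n\ge T^2/4$, the number $4n-T^2$ is a nonnegative integer, and Theorem~\ref{thm:keyprop} says that $T\in\s(n)$ if and only if $4n-T^2$ is a sum of three integer squares; by the Legendre--Gauss theorem, this is equivalent to the purely $2$-adic condition that $4n-T^2$ is \emph{not} of the form $4^k(8\ell+7)$ with $k,\ell\in\N$. We will also use the standard corollary of Legendre--Gauss that a nonnegative integer $m$ is a sum of three squares if and only if $4m$ is. In each of the three cases it then only remains to inspect $4n-T^2$ modulo appropriate powers of $2$.

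Part~(1) is immediate: if $T$ is odd then $n$ is odd, so $4n\equiv 4\mod{8}$ and $T^2\equiv 1\mod{8}$, whence $4n-T^2\equiv 3\mod{8}$. Such a number is odd, so it can only be of the excluded form with $k=0$; but then its residue mod $8$ would have to be $7$, not $3$. Hence $T\in\s(n)$ always. (This is exactly the odd case in the proof of Proposition~\ref{prop:no4}.) For part~(2), write $T=2u$ with $u$ odd, so $n$ is even and $4n-T^2=4(n-u^2)$; by the corollary above, $4n-T^2$ is a sum of three squares exactly when $n-u^2$ is. Since $u^2\equiv 1\mod{8}$, the residue of $n-u^2$ mod $8$ is determined by $n$ mod $8$: if $8\mid n$ then $n-u^2\equiv 7\mod{8}$, which is excluded, so $T\notin\s(n)$; while if $n\equiv 2,4,6\mod{8}$ then $n-u^2\equiv 1,3,5\mod{8}$, which is odd and $\not\equiv 7$, hence not excluded, so $T\in\s(n)$. (Here $n-u^2\ge 0$ because $n\ge T^2/4=u^2$; in fact it is positive in the relevant range, since $n$ is even and $u^2$ is odd.) Thus $T\in\s(n)$ if and only if $n\not\equiv 0\mod{8}$.

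For part~(3), assume $4\mid T$, so $16\mid T^2$; since $n$ is even, $4\mid 4n-T^2$, and hence if $4n-T^2=4^k(8\ell+7)$ then necessarily $k\ge 1$ (the value $k=0$ would make $4n-T^2$ odd). Conversely, fix $k\ge 1$: the assertion ``$4n-T^2=4^k(8\ell+7)$ for some $\ell\ge 0$'' is equivalent to the single congruence $4n-T^2\equiv 7\cdot 4^k\mod{2^{2k+3}}$. Indeed, from the congruence we may write $4n-T^2=4^k(8\ell+7)$ with $\ell\in\Z$, and then $n\ge T^2/4$ forces $4n-T^2\ge 0$, hence $8\ell+7\ge 0$, hence $\ell\ge 0$. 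Using $7\cdot 4^k\equiv-4^k\mod{2^{2k+3}}$, the congruence becomes $4n\equiv T^2-4^k\mod{2^{2k+3}}$; as $4$ divides both sides, this cuts out a single residue class of $n$. Letting $k$ range over the positive integers and taking the union gives the exceptional set in the statement. What is left is bookkeeping: the classes are pairwise disjoint---for $k<k'$ the difference $4^{k'}-4^k$ has $2$-adic valuation $2k<2k+3$, so the corresponding congruences are incompatible---and, as noted, no class survives for $k=0$; these are exactly the assertions made parenthetically in the statement.

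An honest assessment: there is no single hard step. All the weight sits in Theorem~\ref{thm:keyprop} together with the Legendre--Gauss theorem, after which the three parts are exercises in arithmetic modulo powers of $2$. The one place calling for care is part~(3): translating ``$4n-T^2$ of the form $4^k(8\ell+7)$'' into a clean, non-overlapping description of the excluded residues of $n$, while checking that the constraint $\ell\ge 0$ and the vacuity of the $k=0$ case are both automatic.
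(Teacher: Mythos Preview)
The paper leaves this proposition to the reader, so there is no proof to compare against; your approach via Theorem~\ref{thm:keyprop} and Legendre--Gauss is exactly the intended one, and parts~(1) and~(2) are clean and correct.

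In part~(3), however, you stop one line short of the goal, and the missing line matters. You correctly reduce $T\notin\s(n)$ to the existence of $k\ge 1$ with
\[
4n\equiv T^2-4^k\pmod{2^{2k+3}},
\]
and then write ``as $4$ divides both sides, this cuts out a single residue class of $n$ \dots\ taking the union gives the exceptional set in the statement.'' But dividing through by $4$ yields $n\equiv (T^2-4^k)/4\pmod{2^{2k+1}}$, which is \emph{not} the class $\{T^2-4^k\bmod 2^{2k+3}\}$ printed in the proposition: neither the representative nor the modulus matches, and no reindexing repairs both simultaneously. Concretely, take $T=4$ and $n=12$. The proposition's $k=1$ class is $\{16-4\bmod 32\}=\{12\bmod 32\}$, so the statement as printed predicts $4\notin\s(12)$; yet $12=3^2+1^2+1^2+(-1)^2$ with $3+1+1-1=4$. (Your own congruence for $k=1$ reads $4n\equiv 12\pmod{32}$, i.e.\ $n\equiv 3\pmod 8$, which is vacuous for even $n$---so your $k=1$ term is empty, another discrepancy you do not flag.) After dividing by $4$ and reindexing, what your argument actually establishes is the criterion
\[
n\notin\bigcup_{k\ge 1}\bigl\{(T/2)^2-4^k\bmod 2^{2k+3}\bigr\},
\]
which does pass the test above (for $T=4$ the $k=1$ class is $\{0\bmod 32\}$, and $12\notin\{0\bmod 32\}$) and agrees with Table~\ref{tbl:exceptional} (e.g.\ $T=8$, $n=44$: here $(T/2)^2-4=12$ and $44\equiv 12\pmod{32}$). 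So the gap is real, but it exposes what appears to be a typo in the stated proposition---$T^2$ should be $(T/2)^2$---rather than a flaw in your method.
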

\ni
We leave the (routine) proof of this proposition to the interested reader. 


\providecommand{\bysame}{\leavevmode\hbox to3em{\hrulefill}\thinspace}
\providecommand{\MR}{\relax\ifhmode\unskip\space\fi MR }
\providecommand{\MRhref}[2]{%
  \href{http://www.ams.org/mathscinet-getitem?mr=#1}{#2}
}
\providecommand{\href}[2]{#2}

\end{document}